\documentclass{ieeeconf}
\usepackage[T1]{fontenc}
\usepackage{microtype}

\usepackage{cite}


\usepackage{amssymb}
\usepackage{amsmath}
\usepackage{graphicx}

\newtheorem{theorem}{Theorem}
\newtheorem{lemma}{Lemma}
\newtheorem{corollary}{Corollary}
\newtheorem{definition}{Definition}
\DeclareMathOperator{\tr}{tr}
\DeclareMathOperator{\im}{im}
\DeclareMathOperator{\rk}{rank}

\newcommand{\dgp}[2]{%
\Delta(\,#1\,|\,#2\,)%
}

\newcommand*\dif{\mathop{}\!\mathrm{d}}

\hyphenation{op-tical net-works semi-conduc-tor}

\begin{document}
\title{Submodularity of Energy Related \\Controllability Metrics}

\author{Fabrizio~L.~Cortesi,
        Tyler~H.~Summers, and John Lygeros%
\thanks{The authors are with the Automatic Control Laboratory, ETH Zurich, Switzerland.}
}

\maketitle

\begin{abstract}
The quantification of controllability and observability has recently received new interest in the context of large, complex networks of dynamical systems. A fundamental but computationally difficult problem is the placement or selection of actuators and sensors that optimize real-valued controllability and observability metrics of the network.
We show that several classes of energy related metrics associated with the controllability Gramian in linear dynamical systems have a strong structural property, called \emph{submodularity}. This property allows for an approximation guarantee by using a simple greedy heuristic for their maximization. The results are illustrated for randomly generated systems and for placement of power electronic actuators in a model of the European power grid. 
\end{abstract}

\IEEEpeerreviewmaketitle


\section{Introduction}
The concepts of controllability and observability have long been considered fundamental structural system properties since Kalman's seminal work over half a century ago \cite{kalman1959general}. Recently though they have seen renewed interest in the context of large, complex networks. 
There are many engineering examples such as the electric power grid or transportation networks, but also biological examples like neural or metabolic networks.
Many efforts have also been made to understand the complex dynamics in social and economic networks, where there is much interest in controlling the spreading of contagion, such as shocks (e.g., defaults) in financial networks \cite{gai2010contagion_finance}.

In the context of such networks, an interesting problem to consider before the design of a control law is the selection or placement of sensors and actuators in the network.  Many approaches for sensor and actuator selection problems have been proposed \cite{van2001review}, but the corresponding combinatorial optimization problems are generally considered to be computationally difficult. Here we will consider the combinatorial structure of these problems that allows approximation guarantees using metrics based on the controllability and observability Gramians.

In a prominent article \cite{liu2011controllability}, Kalman's rank criterion together with the concept of structural controllability \cite{lin1974structural} is used to find the minimal set of driver nodes, the smallest set of nodes that have to be controlled individually in order to achieve full control over the entire network.
But this results in a rather simplified and sometimes misleading view, as the rank criterion can only considers whether a system is completely controllable/observable, regardless of other considerations such as the energy required to actually drive the system around the state space. 

The controllability Gramian quantifies the energy required to move the system around the state space. One can consider several scalarizations, such as the maximum eigenvalue, trace and determinant of the inverse Gramian \cite{muller1972analysis}.
Recently \cite{tylerquote} used the minimum eigenvalue to select which nodes to control, and also a trade-off between control energy and the minimum number of driver nodes is presented. The Gramian can be computed by solving a Lyapunov equation. There are specialized methods\cite{hammarling1982numerical}, including recent approximations based on Quantized Tensor Trains \cite{nipdirect}, that enable computation for very large networks.

Although selecting subsets of possible actuators to maximize these metrics is typically computationally hard, some combinatorial structure can improve the situation. For example, the trace of the controllability Gramian was recently found to be a modular set function \cite{summers2013optimal}, which allows globally optimal subsets of actuators to be obtained. In the present paper, we identify what several other metrics of the controllability Gramian that have a strong structural property that give approximation guarantees. Our main result is that
\begin{enumerate}
\item $-\tr(W_c^{-1})$
\item $\log \det W_c$ 
\item $\rk(W_c)$
\end{enumerate} are submodular set functions.
Submodularity has been recognized in many areas in machine learning and computer science, such as \cite{krause2007near}. Together with \cite{summers2013optimal}, the present paper is among the first to study submodularity in controllability problems.

This paper is structured as follows:
Section~II recalls some basics of linear systems theory and discusses several energy related measures for controllability.
Section~III gives a brief introduction into submodular functions and their maximization under cardinality constraint. Then a framework for actuator selection in the context of set functions is presented, and we present our main results on submodularity of controllability metrics. Finally in section~V we illustrate the results on numerical examples.

\section{Energy-related Controllability Measures}
This section reviews several classes of energy-related controllability metrics for dynamical networks based on the controllability Gramian of a linear model for the network dynamics. Of course, real networks have nonlinear dynamics; however, the combinatorial properties of optimal sensor and actuator selection problems, which is the main topic of this paper, are not understood even for linear systems. Furthermore, linear models are often used to approximate nonlinear models near an equilibrium point and have been used as a starting point in many recent papers on controllability in networks.

Because controllability and observability are dual properties \cite{kalman1959general}, we will only discuss controllability; analogous results and interpretations for observability follow directly.

\subsection{Linear System Dynamics}
We consider linear, continuous time, time-invariant dynamical systems of the form
\begin{equation}\label{eq:sys}
\begin{aligned}
\dot{x}(t) &= Ax(t) + Bu(t)
\end{aligned}
\end{equation}
where $A\in \mathbb{R}^{n\times n}$ is the dynamic matrix and $B\in \mathbb{R}^{n\times m}$ is the input matrix. The basic problem we consider can be formulated as follows. Let $V = \{ b_1,...,b_M\} $ be a set of possible columns corresponding to actuators that could be selected to form the input matrix. We consider the set function optimization problem
\begin{equation}\label{eq:maxsubmod}
\max_{S\subseteq V} f(S)\qquad \text{subject to } |S|\leq k,
\end{equation}
where $f : 2^V \rightarrow \mathbb{R}$ is a metric that quantifies to the controllability of the system when choosing subset $S \subseteq V$.

Actuators in real systems are usually energy limited, so an important class of metrics of controllability deals with the amount of input energy required to reach a given state from the origin. In particular, we can pose the following optimal control problem seeking the minimum energy input that drives the system from the origin to a final state $x_f$ at time $t$:
\begin{equation} \label{optprob}
\begin{aligned}
 \underset{{u(\cdot) \in \mathcal{L}_2  }}{\text{minimize}} & &&\int_0^t \Vert u(\tau) \Vert^2 d\tau \\
 \text{subject to} & && \dot{x}(t) = Ax(t) + Bu(t), \\
 			  & && x(0) = 0, \quad x(t) = x_f
 \end{aligned}
\end{equation}
Standard methods from optimal control theory can be used to derive the solution. 
If the system is controllable, the optimal input has the form 
\begin{equation}
\begin{aligned}
u^*(\tau) = B^T e^{A^T(t-\tau)} &\left( \int_0^t e^{A \sigma} B B^T e^{A^T \sigma} d\sigma \right)^{-1} x_f, \\
 &0 \leq \tau \leq t
\end{aligned}
\end{equation}
and the resulting minimum energy is
\begin{equation}
\int_0^t \Vert u^*(\tau) \Vert^2 d\tau = x_f^T \left(  \int_0^t e^{A \sigma} B B^T e^{A^T \sigma} d\sigma \right)^{-1} x_f.
\end{equation}
The symmetric positive semidefinite matrix
\begin{equation}
W_c(t) = \int_0^t e^{A \tau} B B^T e^{A^T \tau} d\tau \ \in \mathbf{R}^{n\times n}
\end{equation}
is called the \emph{controllability Gramian} at time $t$. The controllability Gramian has the same rank as $[B, AB, ..., A^{n-1} B]$ and defines an ellipsoid in the state space
\begin{equation}
\mathcal{E}_{min}(t) = \left\{ x \in \mathbf{R}^n \mid  x^T W_c(t)^{-1} x \leq 1 \right\}
\end{equation}
that contains the set of states reachable in $t$ seconds with one unit or less of input energy. The eigenvectors and corresponding eigenvalues of $W_c$ quantify energy required to move the system in various state space directions.

For stable systems, the state transition matrix $e^{At}$ consists of decaying exponentials, so a finite positive definite limit of the controllability Gramian always exists and is given by
\begin{equation}
W_c = \int_0^\infty e^{A \tau} B B^T e^{A^T \tau} d\tau \ \in \mathbf{R}^{n\times n}
\end{equation}
This matrix defines an ellipsoid in the state space that gives the states reachable with one unit or less of energy, regardless of time. This infinite-horizon controllability Gramian can be computed by solving a Lyapunov equation
\begin{equation}
A W_c + W_c A^T + B B^T = 0,
\end{equation}
which is a system of linear equations and is therefore easily solvable, even for large systems. Specialized algorithms have been developed to compute the solution \cite{hammarling1982numerical}. For unstable system, an alternative definition of the controllability Gramian can be used \cite{balancedgramian}.

We still need scalarize $W_c$, which is a positive semidefinite matrix. We want $W_c$ ``large'' so that $W_c^{-1}$ is ``small'', requiring small amount of input energy to move around the state space. There are a number of possible metrics for the size of $W_c$, several of which we now discuss.

\subsubsection{$\mathbf{tr}(W_c)$}
The trace of the Gramian is inversely related to the average energy and can be interpreted as the average controllability in all directions in the state space. It is also closely related to the system $H_2$ norm:

\begin{equation}
\begin{aligned}
\Vert H \Vert_2^2  
			     &= \textbf{tr} \left( C \int_0^\infty e^{A t} B B^T e^{A^T t} dt C^T \right) \\
			     &= \textbf{tr} (C W_c C^T)
\end{aligned}
\end{equation}
i.e., the system $H_2$ norm is a weighted trace of the controllability Gramian.

%

\subsubsection{$\mathbf{tr}(W_c^{-1})$}
The trace of the inverse controllability Gramian is proprtional to the energy needed on average to move the system around on the state space. Note that when the system is uncontrollable, the inverse Gramian does not exist and the average energy is infinite because there is at least one direction in which it is impossible to move the system using the inputs. In this case, one could consider the trace of the pseudoinverse, which is the average energy required to move the system around the controllable subspace. 


\subsubsection{$\log \det W_c$}
The determinant of the Gramian is related to the volume enclosed by the ellipse it defines
\[V(\mathcal{E}_{min}) = \frac{\pi^{n/2}}{\Gamma(n/2+1)}\sqrt[n]{\det W_c},\]
where $\Gamma$ is the Gamma function. This means that the determinant is a volumetric measure of the set of states that can be reached with one unit or less of input energy. Since determinant is numerically problematic in high dimensions, and because the matrix logarithm is a monotone matrix function that preserves the semidefinite order, we will consider optimizing $\log \det W_c$. Note that for uncontrollable systems, the ellipsoid volume is zero, so $\log \det W_c = -\infty$. In this case, one could consider the associated volume in the controllable subspace. 



\subsubsection{$\lambda_{min} (W_c)$}

The smallest eigenvalue of the Gramian is a worst-case metric inversely related to the amount of energy required to move the system in the direction in the state space that is most difficult to control. 

\subsubsection{$\rk(W_c$)} The rank of the Gramian is the dimension of the controllable subspace.

\section{Optimal Actuator Placement}
In this section, we briefly review submodularity and consider which of the above controllability metrics are submodular, which provides approximation guarantees for associated actuator selection problems. Detailed treatments of the topic are provided by many texts \cite{lovasz83,fujishige2005submodular,schrijver2003combinatorial,iwata2008submodular}, as well as by recent survey \cite{google} which also focuses on maximization.

\subsection{Submodularity Basics}
A set function $f\colon 2^V \to \mathbb{R}$ assigns a real value to every subset $S\subseteq V$.
Submodularity is defined as follows. 
\begin{definition}\label{def:submod}
A set function $f\colon 2^V \to \mathbb{R}$ is submodular if and only if
\[f(A\cup \{a\})-f(A)\geq f(B\cup \{a\})-f(B),\]
for any subsets $A\subseteq B\subseteq V$ and $\{a\}\in V\backslash	B$.
\end{definition}
This is a diminishing gains property: adding an element $a$ to a larger set $B$ will result in a smaller gain than adding the same element to a subset $A$. We will denote this the marginal gain as $\dgp{a}{S}=f(S\cup\{a\})-f(S).$

A useful consequence of this definition is expressed in the following theorem
\begin{theorem}
A set function $f: 2^V \rightarrow \mathbf{R}$ is submodular if and only if the derived set functions $f_a : 2^{V - \{ a \} } \rightarrow \mathbf{R}$
$$f_a (X) = f(X \cup \{a\}) - f(X)  $$ 
are monotone decreasing for all $a \in V$.
\end{theorem}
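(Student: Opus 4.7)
The plan is to recognize that this theorem is essentially a direct reformulation of Definition~\ref{def:submod}, obtained by relabelling the inequality that characterizes submodularity in terms of the derived marginal-gain functions $f_a$. Both directions of the ``if and only if'' should drop out of simply unpacking definitions, so no substantive new idea is expected; the work is purely bookkeeping.

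For the forward direction, assuming $f$ is submodular, I would fix an arbitrary $a \in V$ and take arbitrary $X \subseteq Y$ inside $V \setminus \{a\}$. Applying Definition~\ref{def:submod} with $A := X$, $B := Y$, and the distinguished element $a$ (the membership condition $a \in V \setminus B$ holds automatically because $Y \subseteq V \setminus \{a\}$) yields
\[
f(X \cup \{a\}) - f(X) \;\geq\; f(Y \cup \{a\}) - f(Y),
\]
which is exactly $f_a(X) \geq f_a(Y)$. Hence $f_a$ is monotone decreasing on $2^{V \setminus \{a\}}$, and since $a$ was arbitrary the conclusion holds for every $a \in V$.

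For the converse, assume each $f_a$ is monotone decreasing. Given arbitrary $A \subseteq B \subseteq V$ and $a \in V \setminus B$, I would note that $a \notin A$ as well, so both $A$ and $B$ are subsets of $V \setminus \{a\}$ and $A \subseteq B$ holds in that domain. The hypothesis then gives $f_a(A) \geq f_a(B)$, which when expanded using the definition $f_a(X) = f(X \cup \{a\}) - f(X)$ reproduces the submodular inequality of Definition~\ref{def:submod}.

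The ``main obstacle'' is really only to verify that the quantifiers line up: the constraint ``$a \in V \setminus B$'' in the definition of submodularity is precisely the statement that $A$ and $B$ belong to the domain $2^{V \setminus \{a\}}$ of $f_a$. Once this correspondence is observed, the equivalence is immediate by rewriting a single inequality.
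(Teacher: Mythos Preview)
Your proof is correct: both directions are obtained by direct translation between Definition~\ref{def:submod} and the monotonicity condition on the marginal-gain functions $f_a$, and you have checked the quantifier alignment (the constraint $a\in V\setminus B$ exactly matches the domain restriction $A,B\subseteq V\setminus\{a\}$).

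There is nothing to compare against, however: the paper states this theorem without proof, treating it as a standard and immediate consequence of the definition of submodularity. Your argument supplies precisely the routine verification that the paper omits.
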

A set function $f_a(S)$ is called monotone decreasing if for all $S_1$, $S_2\subseteq V$ holds, that
\[S_1\subseteq S_2 \quad \Leftrightarrow\quad f_a(S_1)\geq f_a(S_2).\]

\subsection{Submodular Function Maximization}
We consider set function optimization problems of the form
\begin{equation}\label{eq:maxsubmod}
\max_{S\subseteq V} f(S)\qquad \text{subject to } |S|\leq k.
\end{equation}
Maximization of monotone submodular functions is NP-hard, but the so-called greedy heuristic can be used to obtain a solution that is provably close to the optimal solution. The greedy algorithm for \eqref{optprob} starts with an empty set, $S_0 \leftarrow \emptyset$, computes the gain $\dgp{a}{S_i} = f(S_{i}\cup \{a\})-f(S_{i})$ for all elements $a\in V\backslash S_{i}$ and adds the element with the highest gain:
\[S_{i+1} \leftarrow S_{i}\cup \{\arg \max_{a} \dgp{a}{S_i}\; |\; a\in V\backslash S_{i}\}. \]
The algorithm terminates after $k$ iterations.

Performance for a submodular, non-negative, monotone set function is guaranteed by the well known bound~\cite{greedybound}:
\begin{equation}
	\dfrac{f(S_{greedy})}{f(S_{optimal})}
	\geq 1-\left(\frac{k-1}{k}\right)^k
	\geq \frac{e-1}{e}
	\approx 0.63,
\label{eq:greedy_bound}\end{equation}
where $S_{optimal}$ is the optimal subset and $S_{greedy}$ is the subset obtained by the greedy algorithm. This is the best any polynomial time algorithm can achieve \cite{feige1998threshold}, assuming $P\neq NP$. Note that this is a worst-case bound; the greedy algorithm often performs much better than the bound in practice. 

The space of symmetric $n\times n$ matrices $\mathcal{S}^n$ has a partial semidefinite order: $W_1\succeq W_2$ if  $W_1-W_2\succeq 0$.
The space of symmetric positive definite matrices is denoted $\mathcal{S}_{++}^n$ and the space of symmetric positive semidefinite matrices is denoted $\mathcal{S}^n_{+}$. 

We now demonstrate the submodularity of several classes of controllability metrics involving functions of the controllability Gramian.

\subsection{Trace of the inverse Gramian}
Suppose $A \in \mathbf{R}^{n\times n}$ is a stable system dynamics matrix and $V = \{b_1,..., b_M \} $ is a set of possible columns that can be used to form or modify the system input matrix $B$. The problem is to choose a subset of $V$ to maximize a metric of controllability. For a given $S \subseteq V$, we form $B_S = [B_0 \quad b_s]$ given a (possibly empty) existing matrix $B_0$ and using the associated columns defined by  $s \in S$ and the associated controllability Gramian $W_S  = \int_0^\infty e^{A \tau} B_S B_S^T e^{A^T \tau} d\tau =\sum_{s\in S}W_{\{s\}}$, which is the unique positive semidefinite solution the Lyapunov equation
\begin{equation}
AW_S + W_S A^T + B_S B_S^T = 0.
\end{equation}
To simplify notation, we write $W_s$ for $W_{\{s\} }$. 

We first consider the trace of the inverse of the controllability Gramian. We assume in this subsection that for any $S \subseteq V$ the associated Gramian $W_S$ is invertible. This is the case, for example, if the network already has a set of actuators that provide controllability and we would like to add additional actuators to improve controllability. We discuss how to deal with non-invertibility of the Gramian subsequently.

\begin{theorem}\label{theorem:trace}
Let $V = \{ b_1,...,b_M\}$ be a set of possible input matrix columns and $W_S$ the controllability Gramian associated with $S \subseteq V$. The set function $f : 2^{V}\to \mathbf{R}$ defined as
\[f(S) = -\tr(W_{S}^{-1}) \]
 is submodular.
\end{theorem}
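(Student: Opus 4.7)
My plan is to verify the diminishing-returns inequality of Definition~1 directly. Fix $A \subseteq B \subseteq V$ and $a \in V \setminus B$, and set $X = W_A$, $Y = W_B$, $Z = W_a$. Because each $W_s$ is PSD and $W_S$ is linear in $B_S B_S^T$, one has $X \preceq Y$ (with $Y - X = \sum_{s \in B \setminus A} W_s \succeq 0$) and $Z \succeq 0$. Submodularity then reads
\[
\tr(X^{-1}) - \tr\!\bigl((X+Z)^{-1}\bigr) \;\geq\; \tr(Y^{-1}) - \tr\!\bigl((Y+Z)^{-1}\bigr),
\]
so the task is to show that this marginal gain is non-increasing in the Loewner order of the base matrix.

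The first step is to convert each side into an integral. Since $\tfrac{d}{dt}(M+tZ)^{-1} = -(M+tZ)^{-1} Z (M+tZ)^{-1}$, the fundamental theorem of calculus along $t \in [0,1]$ gives
\[
\tr(M^{-1}) - \tr\!\bigl((M+Z)^{-1}\bigr) = \int_0^1 \tr\!\bigl((M+tZ)^{-1} Z (M+tZ)^{-1}\bigr)\, dt
\]
for $M \in \{X,Y\}$. Setting $P_t = (X + tZ)^{-1}$ and $Q_t = (Y + tZ)^{-1}$, operator antitonicity of the matrix inverse on $\mathcal{S}^n_{++}$ yields $P_t \succeq Q_t \succ 0$ for every $t \geq 0$, and the submodular inequality reduces, by integration, to the pointwise claim $\tr(Z P_t^2) \geq \tr(Z Q_t^2)$ for each $t \in [0,1]$.

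The main obstacle is this last pointwise trace inequality. It does \emph{not} follow from $P_t \succeq Q_t$ alone, because the squaring map on positive-definite matrices is not operator monotone, so some additional structure relating $P_t$, $Q_t$, and $Z$ must be used. The most promising lever is the Duhamel-type identity $P_t - Q_t = P_t (Y - X) Q_t$, obtained from $Q_t^{-1} - P_t^{-1} = Y - X \succeq 0$, combined with the algebraic factorisation $P_t^2 - Q_t^2 = P_t(P_t - Q_t) + (P_t - Q_t) Q_t$. Substituting and using cyclicity of the trace expresses $\tr(Z(P_t^2 - Q_t^2))$ as a sum of traces in which the PSD factor $Y - X$ appears explicitly, sandwiched between the inverse Gramians; the aim is to recognise each resulting summand, after symmetric insertion of $(Y-X)^{1/2}$, as a trace of a product of PSD matrices and hence non-negative. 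A fallback, if this manipulation proves too tangled, is to decompose $W_a$ into rank-one updates $c_k c_k^T$, apply Sherman--Morrison iteratively to obtain the closed-form per-update gain $c_k^T W^{-2} c_k / (1 + c_k^T W^{-1} c_k)$, and analyse monotonicity of this scalar ratio in $W$. Once the pointwise integrand inequality is in hand, integrating over $t \in [0,1]$ closes the submodularity proof; monotonicity of $f$ follows directly from operator antitonicity of the inverse and serves as a useful sanity check along the way.
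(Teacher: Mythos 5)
Your reduction is essentially the same as the paper's: pass to the marginal-gain matrix function $W \mapsto \tr(W^{-1}) - \tr\bigl((W+Z)^{-1}\bigr)$, represent it as $\int_0^1 \tr\bigl(Z\,(W+tZ)^{-2}\bigr)\,dt$, and try to show the integrand is antitone in $W$, i.e.\ $\tr(Z P_t^2) \geq \tr(Z Q_t^2)$ for $P_t = (X+tZ)^{-1} \succeq Q_t = (Y+tZ)^{-1}$. The difference is that the paper simply asserts at this point that $M_1 \preceq M_2$ implies $M_1^{-2} \succeq M_2^{-2}$ (in its notation, $(A+tB+W_a)^{-2} \preceq (A+tB)^{-2}$), whereas you correctly flag that this does not follow, because $x \mapsto x^2$ is not operator monotone and hence $x \mapsto x^{-2}$ is not operator antitone. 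You have therefore put your finger on exactly the step that carries all of the difficulty --- and which the paper's own proof gets wrong.

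The problem is that your proposal stops there, and neither of your proposed escape routes can close the gap. The identity $P^2 - Q^2 = P^2 E Q + P E Q^2$ with $E = Y - X$ produces terms such as $\tr(Z P^2 E Q)$, which are traces of products of \emph{three or more} positive semidefinite factors; such traces can be negative (only a product of two PSD factors is guaranteed a nonnegative trace), and no symmetric insertion of $E^{1/2}$ reduces $\tr(ZP^2EQ)$ to a two-factor product because $P^2$ and $Q$ sit on opposite sides of $E$. The Sherman--Morrison fallback hits the same wall: the per-update gain $c^T W^{-2} c/(1 + c^T W^{-1}c)$ has a numerator $c^T W^{-2} c$ that is not antitone in $W$ (again because $W \mapsto W^{-2}$ is not operator antitone), and $W_a$ is generically full rank, so decomposing it into rank-one pieces reintroduces dependence on an evolving base matrix. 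In fact no repair is possible: the pointwise inequality $\tr(ZP_t^2)\geq\tr(ZQ_t^2)$ fails for suitable $X \preceq Y$ and $Z \succeq 0$, the theorem itself admits counterexamples, and this particular claim was subsequently retracted in a published correction to the journal version of this work. So your write-up is a correct diagnosis of where any proof along these lines must break down, but it is not a proof, and the missing step cannot be supplied.
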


Before proving Theorem \ref{theorem:trace}, we need a result on differentiable matrix functions that are monotone with respect to the semidefinite order.

\begin{definition}\label{def:mon}
A matrix function $f : \mathbf{S}^n_{++}\to\mathbf{R}$ is called monotone decreasing if
\[W_1\preceq W_2 \quad \Rightarrow \quad f(W_1)\geq f(W_2) \qquad \forall\, W_1, W_2 \in \mathbf{S}^n_{++}.\]
\end{definition}

\begin{lemma}\label{theorem:mono} Let $f\colon\mathbf{S}^n_{++}\to\mathbf{R}$ be continuously differentiable matrix function and $X(t)=A+tB$ a ray in the space of symmetric positive definite matrices with arbitrary $A \in \mathbf{S}^n_{++}$, $B\in \mathbf{S}^n_+$, and $t > 0$.
Then $f$ is monotone decreasing if
\begin{equation}B\succeq 0 \quad \Rightarrow \quad \frac{\dif}{\dif t}f(X(t))\leq 0\qquad \forall t.\label{eq:mono}
\end{equation}
\end{lemma}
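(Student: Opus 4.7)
The plan is to reduce the claim to a one-dimensional statement via the fundamental theorem of calculus along the line segment connecting the two matrices we wish to compare.

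Concretely, suppose $W_1, W_2 \in \mathbf{S}^n_{++}$ with $W_1 \preceq W_2$. I would set $A = W_1$ and $B = W_2 - W_1$. By hypothesis $B \in \mathbf{S}^n_+$, and for every $t \in [0,1]$ the point $X(t) = W_1 + t(W_2 - W_1) \succeq W_1 \succ 0$ lies in $\mathbf{S}^n_{++}$, so that the ray described in the lemma (restricted to $t \in [0,1]$) stays inside the domain of $f$. This gives $X(0) = W_1$ and $X(1) = W_2$, reducing the comparison of $f(W_1)$ and $f(W_2)$ to the behavior of the scalar function $g(t) := f(X(t))$ on $[0,1]$.

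Because $f$ is continuously differentiable and $X(t)$ is an affine (hence smooth) map into $\mathbf{S}^n_{++}$, the composition $g$ is $C^1$ on $[0,1]$ and the fundamental theorem of calculus gives
\begin{equation*}
f(W_2) - f(W_1) \;=\; g(1) - g(0) \;=\; \int_0^1 \frac{\dif}{\dif t} f(X(t))\,\dif t.
\end{equation*}
The standing hypothesis \eqref{eq:mono} says that, whenever $B \succeq 0$, the integrand is non-positive for all $t > 0$. Thus the right-hand side is $\leq 0$, yielding $f(W_1) \geq f(W_2)$, which is precisely Definition~\ref{def:mon}.

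There is no real obstacle here; the only point worth checking is that the segment stays inside the open cone $\mathbf{S}^n_{++}$ (so that differentiability of $f$ may be invoked) and that the hypothesis is applied to the particular choice $A = W_1$, $B = W_2 - W_1$ rather than to some abstract ray. Both are immediate from $W_1 \succ 0$ and $W_2 - W_1 \succeq 0$. Consequently the lemma follows in a couple of lines once the right parameterization is written down.
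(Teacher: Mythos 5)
Your proof is correct and follows essentially the same route as the paper: parameterize the segment between $W_1$ and $W_2$ as a ray $X(t)=A+tB$ with $B=W_2-W_1\succeq 0$ and apply the fundamental theorem of calculus to $f(X(t))$. Your version is in fact slightly more explicit than the paper's, which merely asserts the existence of suitable $A$, $B$, $t_1$, $t_2$ rather than writing down the choice $A=W_1$, $B=W_2-W_1$.
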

\textit{Proof:} Take any $W_1$,~$W_2\in\mathbf{S}^n_{++}$ and assume $W_1\preceq W_2$. Then there exist $t_1, t_2\in \mathbf{R}$, $A \succ 0$, $B \succeq 0$, and $t_1\leq t_2$  such that $X(t_1)=W_1$ and $X(t_2)=W_2$.
If $B\succeq 0$ implies $\frac{\dif}{\dif t}f(X(t))\leq 0$, then from
\begin{equation*}
 f(X(t_2)) = f(X(t_1)) +\int_{t_1}^{t_2}\!\frac{\dif}{\dif t}f(X(t))\dif t
\end{equation*}
it follows that $f(W_1)\geq f(W_2)$.

\textit{Proof of Theorem \ref{theorem:trace}:} 
Consider the derived set functions $f_a: 2^{V-\{a\}} \to \mathbf{R}$ 
\begin{equation*}
\begin{aligned}
f_a(S) &=  -\tr((W_{S\cup \{a\} })^{-1}) + \tr((W_{S})^{-1}) \\
            &= -\tr((W_{S}+W_{a})^{-1}) + \tr((W_{S})^{-1}).
\end{aligned}
\end{equation*}
and the derived matrix functions $\hat{f}_a :  \mathbf{S}_{++}^n \to \mathbf{R}$ defined by $\hat{f}_a(W_{S}) = f_a(S).$
Since $S_1\subseteq S_2\Leftrightarrow W_{S_1} \preceq W_{S_2}$, it is  clear that $f_a(S_1)\geq f_a(S_2)\Leftrightarrow \hat{f}_a(W_{S_1}) \geq \hat{f}_a(W_{S_2})$ and therefore if $\hat{f}_a$ is monotone decreasing, then so is $f_a$.

Let $X(t)=A+tB$ with $A\succ 0$, $B\succeq 0$ and $t>0$. We have
\begin{equation*}
\begin{aligned}
&\frac{\dif}{\dif t}\hat{f}_a\left(X(t)\right) \\
&=-\frac{\dif }{\dif t}\left[ \tr((A+tB+W_{a})^{-1}) + \tr((A+tB)^{-1}) \right]\\
&=\tr\left((A+tB+W_{a})^{-1}B(A+tB+W_{a})^{-1} \right) \\
& \quad - \tr \left((A+tB)^{-1}B(A+tB)^{-1}\right)\\
&=\tr\bigg( \left((A+tB+W_{a})^{-2}-(A+tB)^{-2}\right)B \bigg)\,.
\end{aligned}
\end{equation*}
where we used a standard matrix derivative formula and the cyclic property of trace to obtain the second and third equalities. Since $A+tB+W_{a}\succeq A+tB$, it follows that $(A+tB+W_{a})^{-2}\preceq(A+tB)^{-2}$. Thus,
\[\Gamma=(A+tB+W_{a})^{-2}-(A+tB)^{-2}\preceq 0.\]
Since the trace of the product of a positive and negative semidefinite matrix is non-positive, we have
\[\frac{\dif}{\dif t}\hat{f}_a\left(X(t)\right)=\tr(\Gamma B)\leq 0.\]
Thus, $\hat{f}_a$ and hence $f_a$ are monotone decreasing, and $f$ is submodular by Theorem~\ref{theorem:mono}. 

\subsection{Log determinant of the Gramian}
We now consider the log determinant of the controllability Gramian. We assume again that for any $S \subseteq V$ the associated Gramian is invertible. We have the following result. 
\begin{theorem}\label{theorem:determinant}
Let $V = \{ b_1,...,b_M\}$ be a set of possible input matrix columns and $W_S$ the controllability Gramian associated with $S \subseteq V$. The set function $f\colon 2^{V}\to \mathbf{R}$, defined as
\[f(S) = \log \det W_{S}  \]
is submodular.
\end{theorem}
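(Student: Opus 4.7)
The plan is to follow the template used for Theorem~\ref{theorem:trace}. Introduce the derived set function $f_a(S) = \log\det W_{S\cup\{a\}} - \log\det W_S$ and its matrix analogue $\hat f_a\colon \mathbf{S}^n_{++}\to\mathbf{R}$ defined by $\hat f_a(W) = \log\det(W+W_a) - \log\det W$. Because the Lyapunov equation is linear in $BB^T$, we have $W_{S\cup\{a\}} = W_S + W_a$; and because $S_1\subseteq S_2 \Leftrightarrow W_{S_1}\preceq W_{S_2}$, if $\hat f_a$ is monotone decreasing on $\mathbf{S}^n_{++}$ then so is $f_a$. By the theorem characterising submodularity via derived functions, this will be enough to conclude that $f$ is submodular.

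Next I would apply Lemma~\ref{theorem:mono} to $\hat f_a$: it suffices to check that along any ray $X(t)=A+tB$ with $A\succ 0$, $B\succeq 0$, and $t>0$, the scalar derivative $\frac{\dif}{\dif t}\hat f_a(X(t))$ is non-positive. Using the standard identity $\frac{\dif}{\dif t}\log\det X(t) = \tr(X(t)^{-1}\dot X(t))$ with $\dot X(t)=B$, a short calculation gives
\[
\frac{\dif}{\dif t}\hat f_a(X(t)) = \tr\!\left[\left((A+tB+W_a)^{-1} - (A+tB)^{-1}\right)B\right].
\]
Since $A+tB+W_a\succeq A+tB\succ 0$, Löwner monotonicity of the matrix inverse gives $(A+tB+W_a)^{-1}\preceq (A+tB)^{-1}$, so the bracketed matrix $\Gamma$ is negative semidefinite; the trace of the product of $\Gamma\preceq 0$ with $B\succeq 0$ is then non-positive, which finishes the monotonicity check.

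The argument is a direct analogue of Theorem~\ref{theorem:trace}, and in fact slightly cleaner, because only the first power of the inverse enters and no $X^{-2}$ manipulation is required. The only hypothesis that needs to be maintained carefully, as in the preceding theorem, is the invertibility of $W_S$ for every $S\subseteq V$, which keeps the ray inside $\mathbf{S}^n_{++}$ and makes both $\log\det$ and its derivative well-defined; this is exactly the standing assumption in the statement. I therefore do not expect any substantive obstacle.
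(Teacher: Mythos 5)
Your proposal is correct and follows essentially the same route as the paper's own proof: derived set functions $f_a$, the associated matrix functions $\hat f_a$, the ray lemma, the identity $\frac{\dif}{\dif t}\log\det X(t)=\tr(X(t)^{-1}B)$, and the Löwner monotonicity of the inverse combined with the fact that the trace of a product of a positive and a negative semidefinite matrix is non-positive. No gaps; the paper's version is identical in substance.
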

\begin{proof} The proof uses the same idea as before, namely, showing monotonicity of a family of derived matrix functions. Consider the derived set functions $f_a: 2^{V-\{a\}} \to \mathbf{R}$
\begin{align*}
 f_a(S) &=  \log \det  W_{S \cup \{a\} } - \log \det W_{S} \\
 &=  \log\det (W_{S}+W_{a}) - \log\det W_{S} 
\end{align*}
and the associated matrix functions $\hat{f}_a :  \mathbf{S}_{++}^n \to \mathbf{R}$ defined by $\hat{f}_a(W_{S}) = f_a(S).$

Let $X(t)=A+tB$ with $A\succ 0$, $B\succeq 0$ and $t>0$. We have
\begin{align*}
\frac{\dif}{\dif t} &\hat{f}_a(X(t)) \\
 &=\frac{\dif}{\dif t}\left[ \log \det (A+tB + W_a) - \log \det (A+tB) \right] \\
 &=\tr((A+tB+W_{a})^{-1}B)-\tr((A+tB)^{-1}B)\\
 &=\tr(((A+tB+W_{a})^{-1}-(A+tB)^{-1})B)\\
 &\leq 0.
\end{align*}
where we used again a standard matrix derivative formula and the cyclic property of trace to obtain the second and third equalities.  The remainder of the proof follows the previous proof. 
\end{proof}

\vspace{\baselineskip}
\begin{corollary}
The related set function $g: 2^V \rightarrow \mathbf{R}$ defined by $g(S)=\log\sqrt[n]{\det{W_S}}$ is submodular.
\end{corollary}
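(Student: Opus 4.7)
The plan is to reduce the corollary directly to Theorem~\ref{theorem:determinant} by recognizing $g$ as a positive scalar multiple of the submodular function $f(S) = \log\det W_S$. Using the identity $\sqrt[n]{\det W_S} = (\det W_S)^{1/n}$ and the logarithm rule $\log x^{1/n} = \tfrac{1}{n}\log x$, I would rewrite
\begin{equation*}
g(S) \;=\; \log \sqrt[n]{\det W_S} \;=\; \tfrac{1}{n}\log \det W_S \;=\; \tfrac{1}{n} f(S).
\end{equation*}

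Next I would invoke the elementary fact that submodularity is preserved under multiplication by a non-negative scalar: if $f$ satisfies the diminishing-gains inequality of Definition~\ref{def:submod}, then for any $c \geq 0$ and any $A \subseteq B \subseteq V$, $a \in V \setminus B$,
\begin{equation*}
cf(A \cup \{a\}) - cf(A) \;=\; c\bigl(f(A\cup\{a\}) - f(A)\bigr) \;\geq\; c\bigl(f(B\cup\{a\}) - f(B)\bigr),
\end{equation*}
so $cf$ is also submodular. Applied with $c = 1/n > 0$, this gives submodularity of $g$ immediately from Theorem~\ref{theorem:determinant}.

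There is essentially no obstacle in this argument — the corollary is a one-line consequence of the theorem combined with the closure of the class of submodular functions under multiplication by positive constants. The only thing worth noting is that the same assumption as in Theorem~\ref{theorem:determinant} is inherited implicitly, namely that $W_S$ is invertible for every $S \subseteq V$ so that $\log\det W_S$ (and hence its rescaling) is well-defined.
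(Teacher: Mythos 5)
Your argument is correct and is exactly the paper's proof: rewrite $g(S)=\tfrac{1}{n}\log\det W_S$ and note that a non-negative scalar multiple of a submodular function is submodular. Your explicit verification of the scaling-closure inequality and the remark about inheriting the invertibility assumption are fine additions but do not change the route.
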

\textit{Proof:} We have
\[g(S)= \frac{1}{n} \log \det W_S\]
Thus, from Theorem \ref{theorem:determinant} $g$ is a non-negatively scaled version of a submodular function and therefore also submodular.

Not all directions in the state space may be of equal importance, one might want to use a weight matrix as an additional design parameter for an actuator selection problem. In a simple case, the weight matrix could be a diagonal matrix, assigning a relative weight to every state. We have the following corollary; the proof follows exactly the same argument as in the previous theorem.

\begin{corollary}
Let $V = \{ b_1,...,b_M\}$ be a set of possible input matrix columns and $W_S$ the controllability Gramian associated with $S \subseteq V$. The set function  $g\colon 2^V \to \mathbf{R}$ defined as
\[g(S)=\log\det(QW_SQ^T),\]
where  $Q\in\mathbf{R}^{m\times n}$ with $m\leq n$ and $\rk(Q)=m$, is submodular.
\end{corollary}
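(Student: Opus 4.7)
The plan is to mirror the proof of Theorem~\ref{theorem:determinant}, the only new ingredient being the congruence transformation $W \mapsto QWQ^T$. First I would form the derived set functions $g_a: 2^{V-\{a\}} \to \mathbf{R}$,
\[g_a(S) = \log\det\bigl(Q(W_S + W_a)Q^T\bigr) - \log\det(Q W_S Q^T),\]
together with the associated matrix functions $\hat{g}_a: \mathbf{S}_{++}^n \to \mathbf{R}$ defined by $\hat{g}_a(W_S) = g_a(S)$. These are well defined on all of $\mathbf{S}_{++}^n$ because the full row rank of $Q$ guarantees $QWQ^T \succ 0$ whenever $W \succ 0$. As in the previous proofs, the correspondence $S_1 \subseteq S_2 \Leftrightarrow W_{S_1} \preceq W_{S_2}$ reduces monotone decrease of $g_a$ to monotone decrease of $\hat{g}_a$ as a matrix function, and by Lemma~\ref{theorem:mono} the latter reduces to verifying the derivative condition along an arbitrary ray $X(t) = A + tB$ with $A \succ 0$, $B \succeq 0$.

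Next I would differentiate using the standard log-det derivative formula and the cyclic property of the trace to obtain
\[\frac{\dif}{\dif t}\hat{g}_a(X(t)) = \tr\bigl(\Gamma \cdot QBQ^T\bigr),\]
where $\Gamma = (Q(A+tB+W_a)Q^T)^{-1} - (Q(A+tB)Q^T)^{-1}$. The key step is then to observe that since $W_a \succeq 0$, congruence gives $QW_a Q^T \succeq 0$, so $Q(A+tB+W_a)Q^T \succeq Q(A+tB)Q^T \succ 0$; the order-reversing property of inversion on the positive definite cone then yields $\Gamma \preceq 0$. Combined with $QBQ^T \succeq 0$, the trace is non-positive, and the remainder of the argument---applying Lemma~\ref{theorem:mono} to get monotone decrease of $\hat{g}_a$ and hence of $g_a$, then the characterization theorem to conclude submodularity---is identical to the proof of Theorem~\ref{theorem:determinant}.

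No step is genuinely difficult; the calculation is essentially the one in Theorem~\ref{theorem:determinant} with $(A+tB)$ replaced by $Q(A+tB)Q^T$ throughout. The one point worth flagging is the invertibility of $Q(A+tB)Q^T$ and $Q(A+tB+W_a)Q^T$: this is precisely what forces the full row rank hypothesis on $Q$. Without it, $\log\det(QWQ^T)$ could equal $-\infty$ and the inverses appearing in the derivative would fail to exist, so $Q$ having full row rank is not a cosmetic hypothesis but exactly what keeps the argument inside the domain of the matrix logarithm and of Lemma~\ref{theorem:mono}.
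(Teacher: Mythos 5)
Your proposal is correct and follows exactly the route the paper intends: the paper itself only remarks that ``the proof follows exactly the same argument as in the previous theorem,'' and your argument is precisely that argument with $A+tB$ replaced by $Q(A+tB)Q^T$, using congruence to preserve the semidefinite order and full row rank of $Q$ to stay in the positive definite cone. Nothing further is needed.
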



\subsection{Rank of the Gramian}
The controllability metrics $-\tr(W_S^{-1})$ and $\log \det W_S$ fail to distinguish amongst subsets of actuators that do not yield a fully controllable system. In particular, these functions are undefined, or are interpreted to return $-\infty$, when the Gramian is not full rank. One way to handle these cases is to first consider the rank of the controllability Gramian. The following result shows that this is also a submodular set function. 
\begin{theorem}
Let $V = \{ b_1,...,b_M\}$ be a set of possible input matrix columns and $W_S$ the controllability Gramian associated with $S \subseteq V$. The set function $f\colon 2^V \to \mathbf{R}$, defined as
\[f(S) = \rk(W_S) \]
is submodular.
\end{theorem}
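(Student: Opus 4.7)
The plan is to reduce the claim to the well-known submodularity of the dimension function on subspaces of a vector space. The crucial algebraic fact I would use is that for positive semidefinite matrices, the image of a sum is the sum of the images: if $W_1,W_2\succeq 0$, then $\mathrm{im}(W_1+W_2)=\mathrm{im}(W_1)+\mathrm{im}(W_2)$. This holds because $\ker(W_1+W_2)=\ker(W_1)\cap\ker(W_2)$ for PSD matrices (a vector $x$ with $x^T(W_1+W_2)x=0$ must satisfy $x^TW_ix=0$, hence $W_ix=0$), and $\mathrm{im}(W_i)=\ker(W_i)^\perp$ by symmetry. Applying this inductively to $W_S=\sum_{s\in S}W_s$ gives $\mathrm{rank}(W_S)=\dim\bigl(\sum_{s\in S}U_s\bigr)$ where $U_s:=\mathrm{im}(W_s)$.

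With that reduction in hand, the proof becomes a standard dimension-counting exercise. Fix $A\subseteq B\subseteq V$ and $a\in V\setminus B$. Set $U=\sum_{s\in A}U_s$, $\tilde V=\sum_{s\in B}U_s$, $W=U_a$, so that $U\subseteq\tilde V$. I would apply the Grassmann identity $\dim(X+Y)=\dim X+\dim Y-\dim(X\cap Y)$ to both sides of the submodularity inequality to rewrite the marginal gains as
\[
\dim(U+W)-\dim(U)=\dim(W)-\dim(U\cap W),
\]
and analogously for $\tilde V$. The inequality $f(A\cup\{a\})-f(A)\geq f(B\cup\{a\})-f(B)$ then collapses to $\dim(\tilde V\cap W)\geq\dim(U\cap W)$, which is immediate from $U\cap W\subseteq \tilde V\cap W$ together with monotonicity of $\dim$ on subspaces.

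The main (and essentially only) obstacle is justifying the PSD identity $\mathrm{im}(W_1+W_2)=\mathrm{im}(W_1)+\mathrm{im}(W_2)$ cleanly, since without it the derived set function would not decompose into a pure subspace-dimension calculation; for general PSD matrices the image of a sum can be strictly larger than either summand's image, but never larger than their sum, and the kernel/orthogonal complement argument above makes this rigorous in one line. Once that lemma is stated, the rest of the proof is a three-line dimension calculation, so I would structure the write-up as: state the PSD lemma, define $U_s=\mathrm{im}(W_s)$ and rewrite $\mathrm{rank}(W_S)$, then verify the diminishing-returns inequality via the Grassmann formula.
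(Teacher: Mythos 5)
Your proposal is correct and follows the same basic route as the paper: reduce $\rk(W_S)$ to the dimension of a sum of subspaces and verify diminishing returns by Grassmann's identity, so that the marginal gain is $\rk(W_a)-\dim(\im(W_S)\cap\im(W_a))$ and monotonicity of $\im(W_S)$ in $S$ finishes the argument. The one substantive difference is that you explicitly isolate and prove the lemma $\im(W_1+W_2)=\im(W_1)+\im(W_2)$ for positive semidefinite summands, via $\ker(W_1+W_2)=\ker(W_1)\cap\ker(W_2)$ and orthogonal complements. The paper instead asserts the identity $\rk(V_1+V_2)=\rk(V_1)+\rk(V_2)-\dim(\im(V_1)\cap\im(V_2))$ for arbitrary linear transformations, which is false in general (take $V_2=-V_1$); it is valid here precisely because the Gramians are PSD, which is the content of your lemma. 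So your write-up is not merely equivalent --- it supplies the justification the paper's proof tacitly relies on, and I would keep the PSD lemma as a stated step rather than folding it into the rank formula.
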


\begin{proof} For two linear transformations $V_1$,~$V_2 \in \mathbf{R}^{n\times n}$, we have
\begin{equation*}
\begin{aligned}
\rk & (V_1+V_2) \\ &= \rk(V_1) + \rk(V_2) -\dim(\im(V_1) \cap \im(V_2)).
\end{aligned}
\end{equation*}
We can form gain functions $f_a: 2^{V-\{a\} } \rightarrow \mathbf{R}$
\begin{equation}
\begin{aligned}
f_a(S)&=\rk(W_{S \cup \{a\} }) - \rk (W_S) \\
           &=\rk(W_{a}) - \dim(\im(W_S)\cap\im(W_{a}))
\end{aligned}
\end{equation}
It is now easy to see that $f_a$ is monotone decreasing: the first term in the second line is constant and the second term decreases because $\im(W_S)$ only increases with $S$.
\end{proof}

Another way to handle uncontrollable systems is to work with related continuous metrics defined for uncontrollable systems, such as the trace of the pseudoinverse $\tr (W_S^\dag)$, which corresponds to the average energy required to move the system around the controllable subspace, or the log product of non-zero eigenvalues $\log\Pi_{i=1}^{\rk W_S} \lambda_i(W_S)$, which relates to the volume of the subspace reachable with one unit of input energy.

\subsection{Smallest eigenvalue of the Gramian}
We have seen so far that the trace of the Gramian is a modular (and thus both sub- and supermodular) set function of actuator subsets and that the trace of the inverse Gramian and the log determinant of the Gramian are submodular set functions. These functions are also all concave matrix functions of the Gramian. Given the connections between submodular functions and concave functions, one might be tempted to conjecture that any concave function of the Gramian, e.g. the smallest eigenvalue $\lambda_{min}(W_S)$, corresponds to a submodular function of actuator subsets. However, we now show by counterexample that this is false. 

We show an example where this function violates the diminishing gains property of a set function $f(S)$
\begin{equation*}\label{eq:supermodular}
\dgp{s}{A}\geq\dgp{s}{B}, \quad A\subseteq B \subseteq V,\; s\notin B,
\end{equation*}
where $\dgp{s}{A} = f(A \cup \{s\}) - f(A).$ Consider the system defined by
\[A = \begin{bmatrix} -8 &0 & -2\\
0 & -2 & -8\\
7& 0 & -3
\end{bmatrix},\qquad B_V=[b_V]=I_3.\]
We see that the diminishing returns  property holds in some cases, e.g.,
\[\dgp{b_3}{\{b_1\}}=0.037 \geq 0.033 = \dgp{b_3}{\{b_1,b_2\}},\]
but is violated in others
\[\dgp{b_3}{\{b_2\}}=0.001 \leq 0.033 = \dgp{b_3}{\{b_1,b_2\}}.\]


\section{Illustrative Numerical Examples}\label{sec:examples}
In this section, we illustrate the results on randomly generated systems and for placement of power electronic actuators in a model of the European power grid. The problem data is a system dynamics matrix $A \in \mathbf{R}^{n\times n}$, a set of possible input matrix columns $V = \{ b_1,...,b_M \}$, and an integer number $k$ of actuators to choose from this set to form an input matrix that maximizes a controllability metric.

Since the trace of the inverse Gramian and the log determinant do not distinguish amongst actuator subsets that result in uncontrollable systems, we use for these metrics a two-stage greedy algorithm that first greedily optimizes the rank of the Gramian until controllability is achieved. During this stage, if two or more actuators yield a Gramian with the same rank, the trace of the pseudoinverse $\tr (W_S^\dag)$ or the log product of non-zero eigenvalues $\log\Pi_{i=1}^{\rk W_S} \lambda_i(W_S)$ is used to determine preference. After controllability it achieved, the trace of the inverse and log determinant are used in a second stage until the desired number of actuators has been selected..

We assume that there exists a subset of $V$ of size $k$ that renders the system fully controllable. However, although a real-valued controllability metric is optimized, the greedy algorithm does not necessarily guarantee that full controllability is achieved after $k$ iterations; if and how rank constraints could be incorporated is an interesting topic for future work.

\subsection{Greedy performance on a random system}
To evaluate performance of the greedy algorithm and to compare the various controllability metrics, we first consider randomly generated data. We use Matlab's \texttt{rss} routine to generate a stable dynamics matrix with random stable eigenvalues. We use $V=\{e_1,...,e_n\}$, where $e_i$ is the $i$th unit vector in $\mathbf{R}^n$, i.e., we assume one can choose states in which a control input can be injected.

Figure~\ref{fig:greedy_spread} shows the result of applying the greedy algorithm to maximize the log determinant metric with $n=25$ and $k=7$. This problem is small enough to evaluate every possible 7-element actuator subset, and this result is also shown in a histogram. 
Our algorithm finds a good set $S_{greedy}$ scoring
\[\frac{f(S_{greedy})}{f(S_{opt})}\approx98\%\]
of the absolute optimum value, which is better than 99.93\% of all other possible choices.
Those few actuator choices that score better though, are significantly better with respect to the reachable subspace volume, where we only score  
\[\frac{V_{greedy}}{V_{opt}}
=\sqrt{\frac{e^{f(S_{greedy})}}{e^{f(S_{opt})}}} \approx 68.1\%.\]
Similar results are obtained with other randomly generated data.
\begin{figure}
\centering
\includegraphics[width=\linewidth,height=4cm]{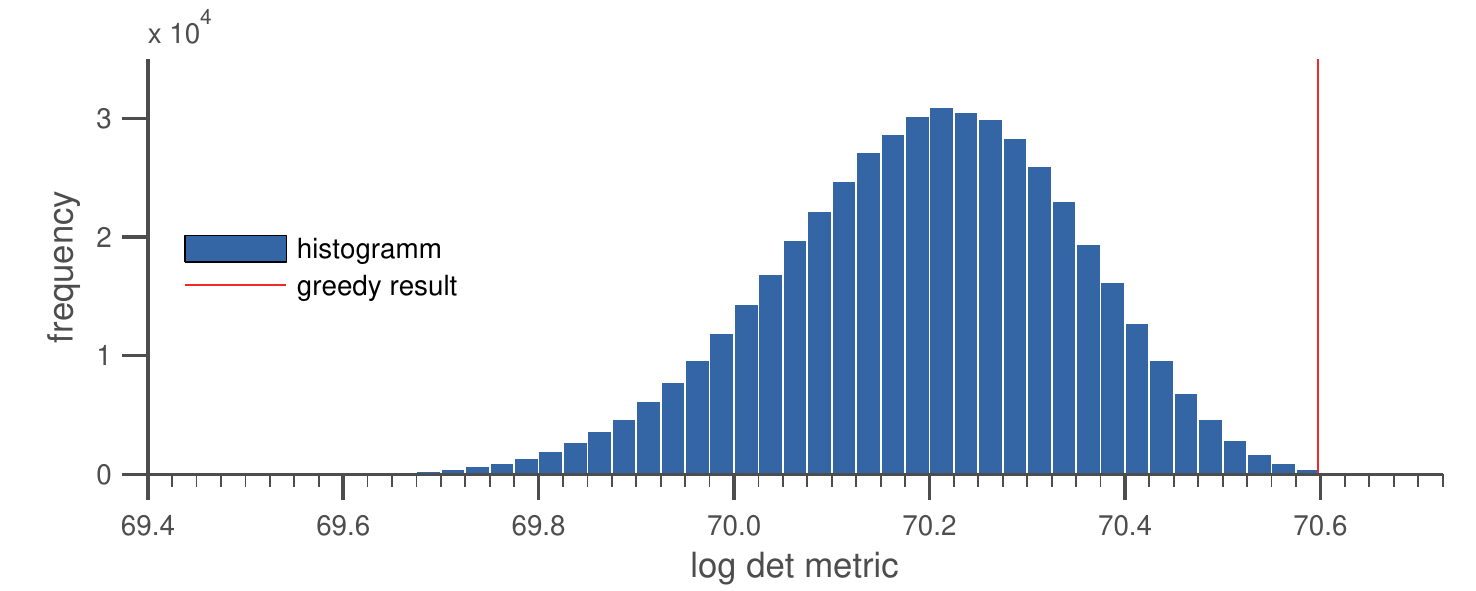}
\caption{\label{fig:greedy_spread}Histogram displaying the positive log determinant metric $f'(S)$ for all possible selection of 7 actuators from a set of possible 25. The result achieved by greedy optimization is displayed by the red line.}
\end{figure}

\subsection{Power Electronic Actuator Selection}
In this second example we consider the placement of HVDC (high voltage direct current) links in a simplified model of the Continental European power grid. HVDC lines connect two points in the alternating current grid via a direct current line which allows the injection of both active and reactive power into the grid on both ends of the line. They can be used to increase transport capacity and to improve transient stability, power oscillation damping, and voltage stability control \cite{hvdc}.

The system dynamics are based on the well known swing equations, which describe the generator rotor dynamics \cite{bookPowerSystems}.
The model consists of 74 buses which are each connected to a generator and a constant impedance load. The system is based on a midday operation snapshot of the actual grid and a simplification of the model presented in \cite{haase}. The HVDC links are modeled as two ideal current sources, one on each end, allowing for three degrees of freedom to influence the frequency dynamics at the terminals; for modeling details see \cite{g3,g1,g2}. The swing equations are linearized around a nominal operating point. Each actuator results in a slightly different linearized dynamics matrix, but all are within 1\% in Frobenius norm. Therefore, we use a constant $A$ taken from one of the linearizations.
\begin{figure}
        \centering
                \includegraphics[width=\linewidth]{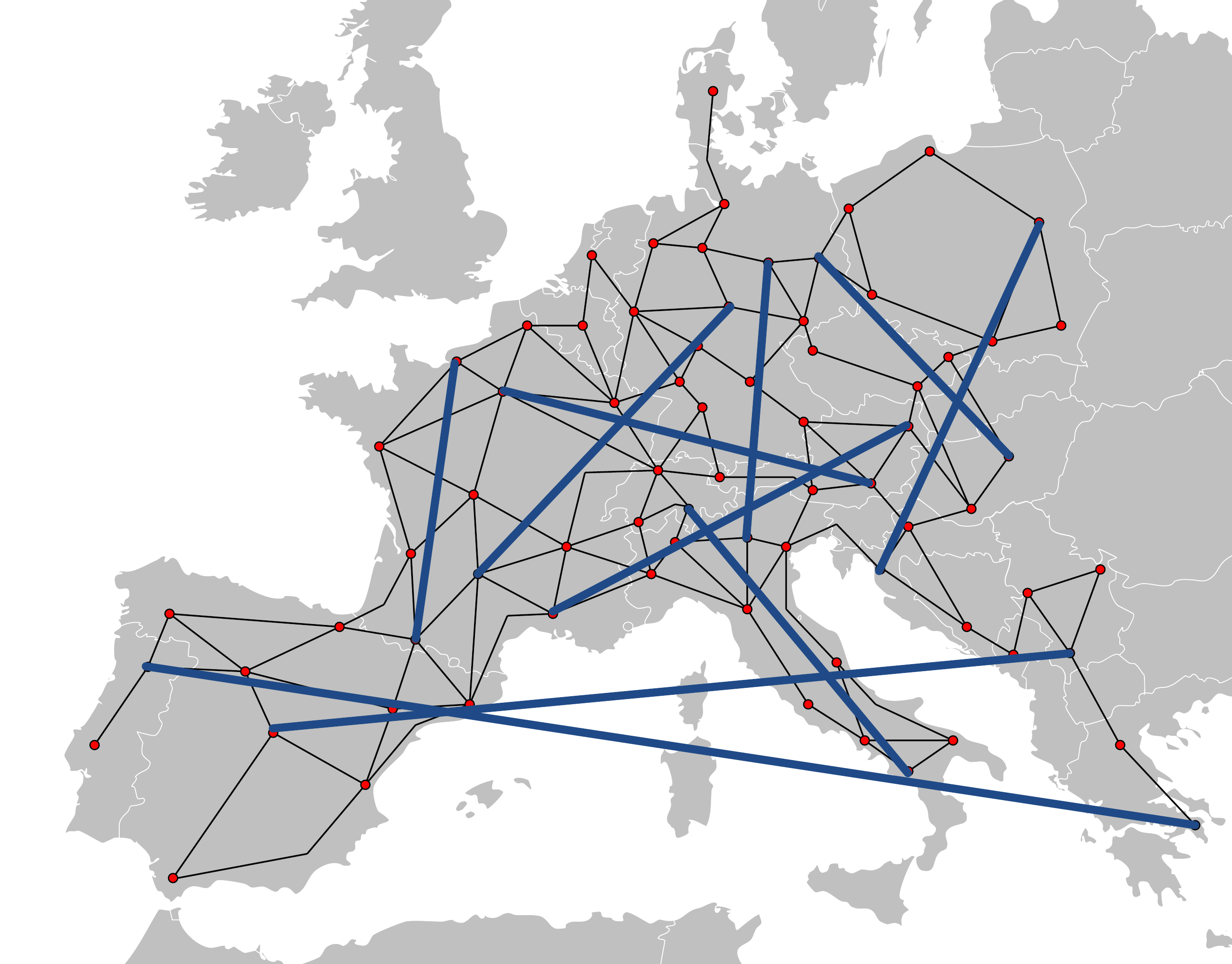}
                \caption{Map of the European power grid model, red nodes represent the buses that are interconnected via the blue HVDC lines placed to maximise $\log \det W_c$.}
                \label{fig:map_res}
\end{figure}
An HVDC link can connect any of the 74 nodes leaving a choice of 2701 possible HVDC locations.  Choosing a subset of size 10 results in approximately $5.6 \times 10^{27}$ possible combinations, which is far too many for brute force evaluation. 

Figure \ref{fig:map_res} show the placement obtained by using the two-stage greedy algorithm with the log determinant metric. Compared to the trace metric used in \cite{summers2013optimal}, we see that the lines are in general longer, connecting buses that are further apart, and more evenly distributed in the network. Also, no node is part of more than one HVDC line. This is due to focus of the trace metric mainly on the few directions in which controllability can be increased the most, whereas the determinant is focused more on the overall performance in all directions.

We emphasize that this model does not consider any other factors such as political, geographical or financial aspects. The greedy algorithm was implemented in Matlab, building upon the Submodular Function Optimization toolbox \cite{toolbox}.

\section{Conclusion}
We have considered optimal actuator placement problems in complex dynamical networks showed that several important class of metrics related to the controllability Gramians, viz. the log determinant and rank for the Gramian, and the trace of the inverse Gramian, result in submodular set functions. This allows approximation guarantees to be obtained with a simple greedy algorithm. By duality, all of the results hold for corresponding sensor placement problems using metrics of the observability Gramian. The results were illustrated in problems with randomly generated data and via placement of power electronic actuators in a model of the European power grid. 

\section*{Acknowledgment}
The authors would like to thank Dr. Alexander Fuchs for providing details and helpful discussion about the power grid model discussed in section~\ref{sec:examples}.


\bibliographystyle{plain}
\bibliography{bibtex/ref}

\end{document}